%% This is file `elsarticle-template-1a-num.tex',
%%
%% Copyright 2009 Elsevier Ltd
%%
%% This file is part of the 'Elsarticle Bundle'.
%% ---------------------------------------------
%%
%% It may be distributed under the conditions of the LaTeX Project Public
%% License, either version 1.2 of this license or (at your option) any
%% later version.  The latest version of this license is in
%%    http://www.latex-project.org/lppl.txt
%% and version 1.2 or later is part of all distributions of LaTeX
%% version 1999/12/01 or later.
%%
%% The list of all files belonging to the 'Elsarticle Bundle' is
%% given in the file `manifest.txt'.
%%
%% Template article for Elsevier's document class `elsarticle'
%% with numbered style bibliographic references
%%
%% $Id: elsarticle-template-1a-num.tex 151 2009-10-08 05:18:25Z rishi $
%% $URL: http://lenova.river-valley.com/svn/elsbst/trunk/elsarticle-template-1a-num.tex $
%%
\documentclass[preprint,12pt]{elsarticle}

%% Use the option review to obtain double line spacing
%% \documentclass[preprint,review,12pt]{elsarticle}

%% Use the options 1p,twocolumn; 3p; 3p,twocolumn; 5p; or 5p,twocolumn
%% for a journal layout:
%% \documentclass[final,1p,times]{elsarticle}
%% \documentclass[final,1p,times,twocolumn]{elsarticle}
%% \documentclass[final,3p,times]{elsarticle}
%% \documentclass[final,3p,times,twocolumn]{elsarticle}
%% \documentclass[final,5p,times]{elsarticle}
%% \documentclass[final,5p,times,twocolumn]{elsarticle}

%% if you use PostScript figures in your article
%% use the graphics package for simple commands
%% \usepackage{graphics}
%% or use the graphicx package for more complicated commands
%% \usepackage{graphicx}
%% or use the epsfig package if you prefer to use the old commands
%% \usepackage{epsfig}

%% The amssymb package provides various useful mathematical symbols
%\usepackage{amssymb}
%% The amsthm package provides extended theorem environments
%% \usepackage{amsthm}

%% The lineno packages adds line numbers. Start line numbering with
%% \begin{linenumbers}, end it with \end{linenumbers}. Or switch it on
%% for the whole article with \linenumbers after \end{frontmatter}.
%% \usepackage{lineno}

\usepackage{amsmath,amssymb,amsfonts,amsthm}

\usepackage{graphicx}

\usepackage{mathrsfs}
\usepackage{mathabx}
\usepackage{dsfont}

\usepackage{color}
\usepackage{cases}

\newtheorem{assumption}{Assumption}
\newtheorem{remark}{Remark}
\newtheorem{theorem}{Theorem}
\newtheorem{lemma}{Lemma}

\let\b=\boldsymbol
\let\mr=\mathrm

%\journal{Nuclear Physics B}

\begin{document}

\begin{frontmatter}

%% Title, authors and addresses

%% use the tnoteref command within \title for footnotes;
%% use the tnotetext command for the associated footnote;
%% use the fnref command within \author or \address for footnotes;
%% use the fntext command for the associated footnote;
%% use the corref command within \author for corresponding author footnotes;
%% use the cortext command for the associated footnote;
%% use the ead command for the email address,
%% and the form \ead[url] for the home page:
%%
%% \title{Title\tnoteref{label1}}
%% \tnotetext[label1]{}
%% \author{Name\corref{cor1}\fnref{label2}}
%% \ead{email address}
%% \ead[url]{home page}
%% \fntext[label2]{}
%% \cortext[cor1]{}
%% \address{Address\fnref{label3}}
%% \fntext[label3]{}

\title{Estimations of the discrete Green's function of the  SDFEM  on Shishkin triangular meshes for  problems with only exponential layers}

\author[label1] {Jin Zhang\corref{cor1}}
\cortext[cor1] {jinzhangalex@hotmail.com }
\address[label1]{School of Mathematics and Statistics, Shandong Normal University,
Jinan 250014, China}

\begin{abstract}
In this technical report, we present estimations of the discrete Green's function of the  streamline diffusion finite element method (SDFEM)  on Shishkin triangular meshes for  problems with only exponential layers.
\end{abstract}

\end{frontmatter}

\section{Continuous problem, Shishkin mesh, SDFEM}
We consider the singularly perturbed boundary value problem
 \begin{equation}\label{eq:model problem}
 \begin{array}{rcl}
-\varepsilon\Delta u+\boldsymbol{b}\cdot \nabla u+cu=f & \mbox{in}& \Omega=(0,1)^{2},\\
 u=0 & \mbox{on}& \partial\Omega ,
 \end{array}
 \end{equation}
where $\varepsilon\ll |\boldsymbol{b}|$ is a small positive parameter, 
$\boldsymbol{b}=(b_{1},b_{2})^{T}$ is a constant vector with $b_1>0,b_2>0$ and $c>0$  is constant. It is also
assumed that $f$ is sufficiently smooth. The solution of \eqref{eq:model problem} typically has
two exponential layers of width $O(\varepsilon\ln(1/\varepsilon))$ at the sides
$x=1$ and $y=1$ of $\Omega$.

When discretizing \eqref{eq:model problem}, we use \textit{Shishkin} meshes, which are piecewise uniform. See \cite{Roos:1998-Layer,Roo1Sty2Tob3:2008-Robust,Linb:2003-Layer} for a detailed discussion of their properties and applications.

First, we define two mesh transition parameters, which are    used to specify  mesh changes from coarse to fine in $x-$ and $y-$direction,
\begin{equation*}
\lambda_{x}:=\min\left\{ \frac{1}{2},\rho\frac{\varepsilon}{\beta_{1}}\ln N \right\} \quad \mbox{and} \quad
\lambda_{y}:=\min\left\{
\frac{1}{2},\rho\frac{\varepsilon}{\beta_{2}}\ln N \right\}.
\end{equation*}
 For technical reasons, we set $\rho=2.5$ in our analysis which is the same with ones in \cite{Zhang:2003-Finite} and \cite{Styn1Tobi2:2003-SDFEM}.
We divide $\Omega$ as in Fig. \ref{Shishkin mesh}:  $\overline{\Omega}=\Omega_{s}\cup\Omega_{x}\cup\Omega_{y}\cup\Omega_{xy}$, where
\begin{align*}
&\Omega_{s}:=\left[0,1-\lambda_{x}\right]\times\left[0,1-\lambda_{y}\right],&&
\Omega_{x}:=\left[ 1-\lambda_{x},1 \right]\times\left[0,1-\lambda_{y}\right],\\
&\Omega_{y}:=\left[0,1-\lambda_{x}\right]\times\left[1-\lambda_{y},1 \right],&&
\Omega_{xy}:=\left[ 1-\lambda_{x},1 \right]\times\left[1-\lambda_{y},1 \right].
\end{align*}

\begin{assumption}\label{assum:varepsilon}
We assume  that $\varepsilon\le N^{-1}$, as is generally the case in practice. Furthermore we assume that $
\lambda_{x}=\rho\varepsilon\beta^{-1}_{1}\ln N$ and 
$\lambda_{y}=\rho\varepsilon\beta^{-1}_{2}\ln N$
as otherwise $N^{-1}$ is exponentially small compared with $\varepsilon$.
\end{assumption}

{\color{blue}
\begin{figure}
 \centering 
\begin{minipage}[t]{0.5\linewidth}
\centering
\includegraphics[width=2.5in]{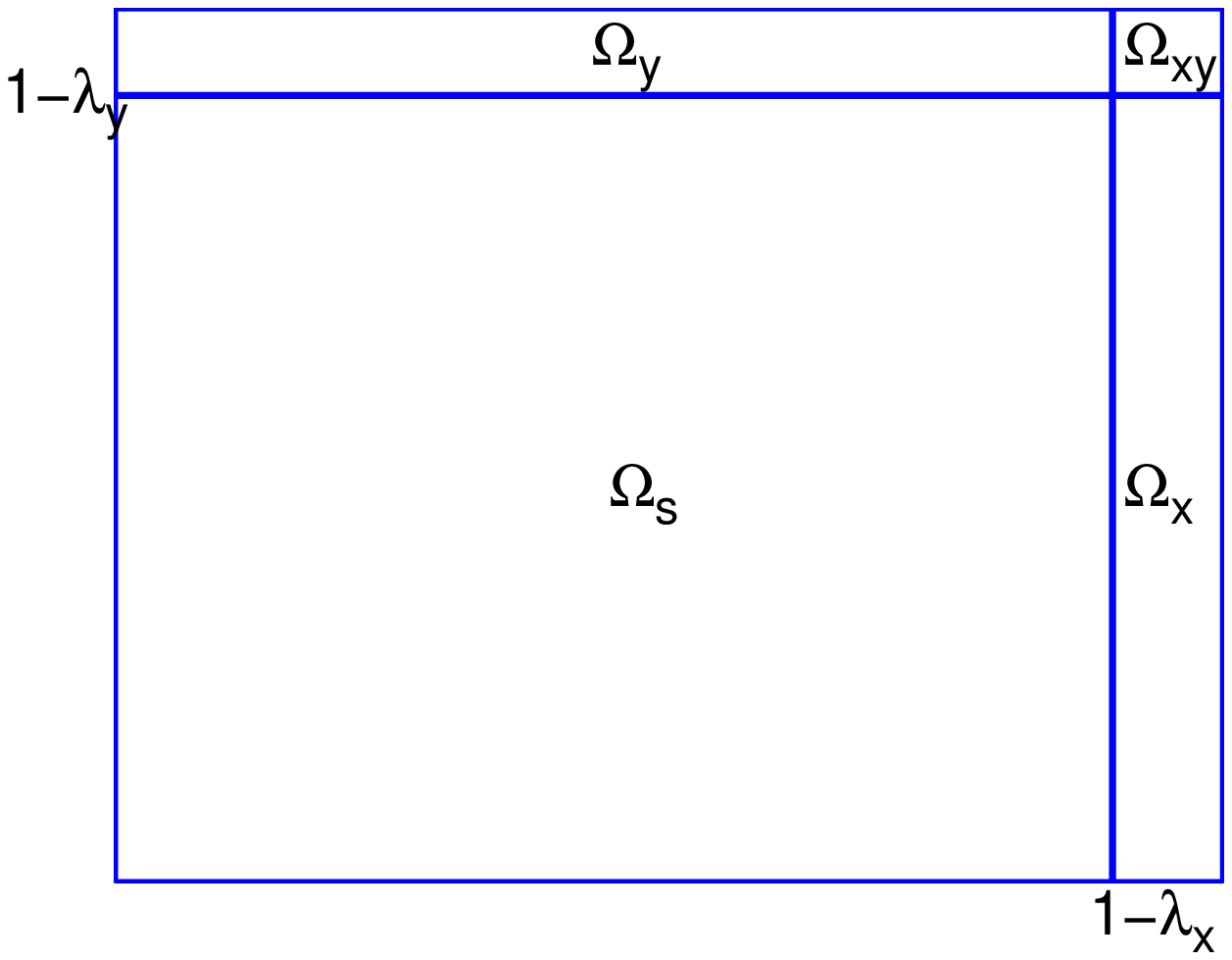}
\end{minipage}%
\begin{minipage}[t]{0.5\linewidth}
\centering
\includegraphics[width=2.5in]{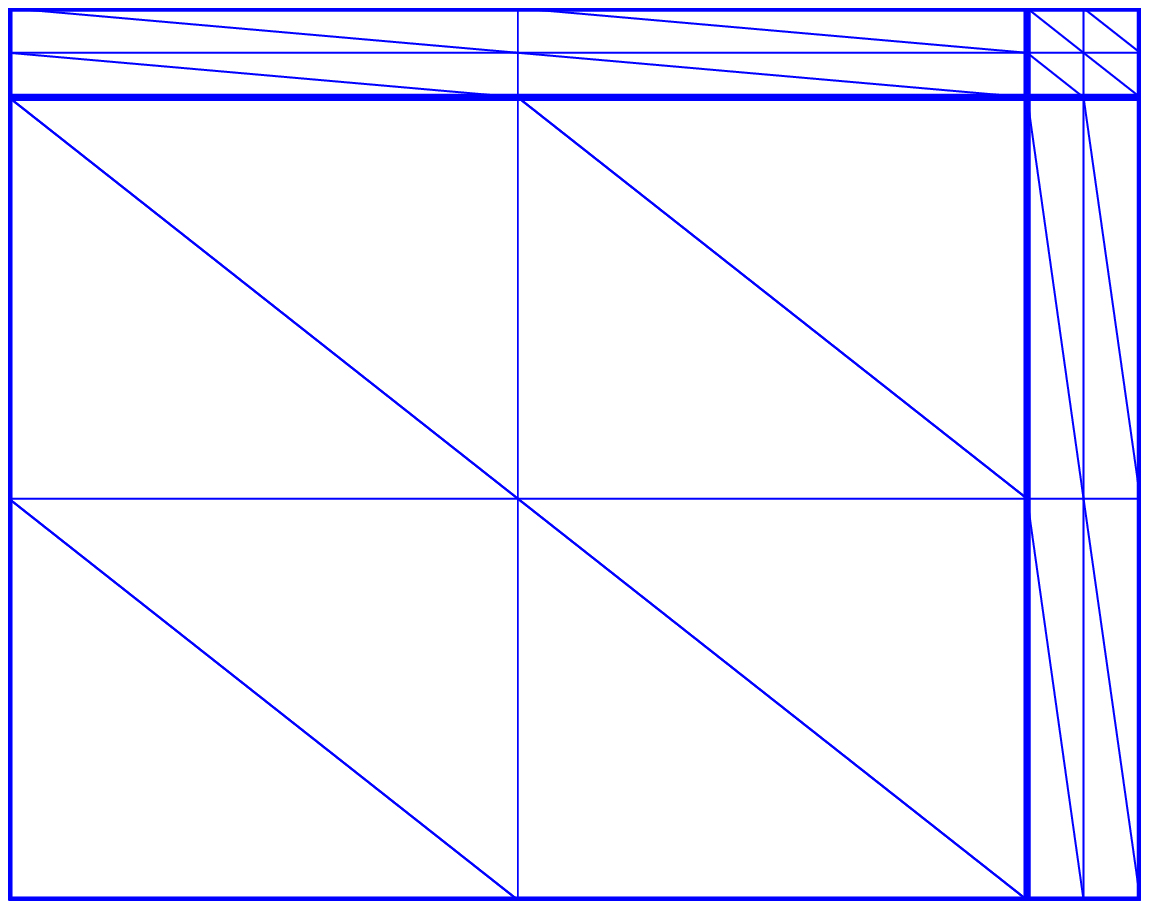}
\end{minipage}
\caption{Dissection of $\Omega$ and triangulation $\mathcal{T}_{N}$.}
\label{Shishkin mesh}
\end{figure}
}
%\begin{figure}
%\centering
%\includegraphics[width=0.45\columnwidth]{fig1a.eps}
%\includegraphics[width=0.45\columnwidth]{fig1b.eps}
%\caption{\footnotesize{Dissection of $\Omega$ and triangulation $\mathcal{T}^{N}$}}\label{fig:1}
%\end{figure}

\begin{figure}
\centering
\includegraphics[width=2.5in]{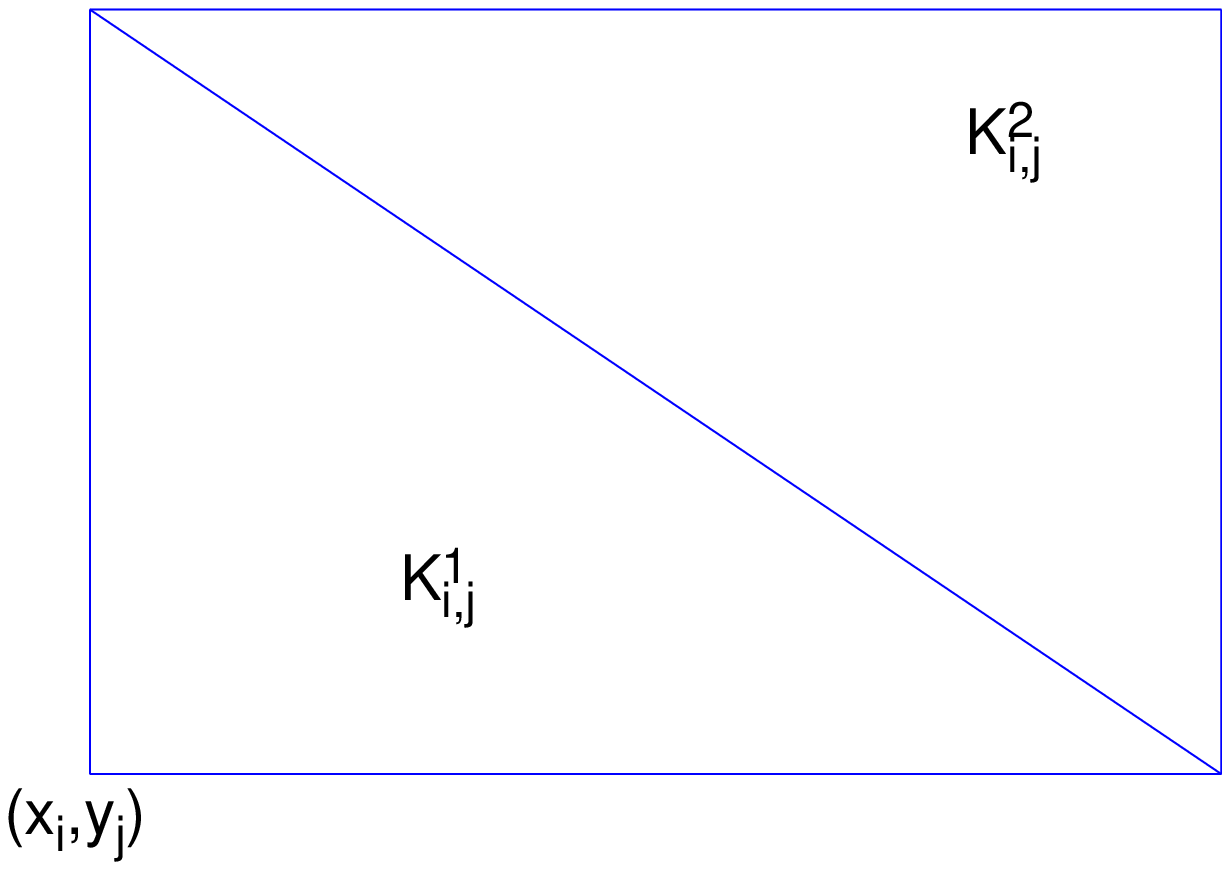}
\caption{$K^{1}_{i,j}$ and $K^{2}_{i,j}$}
\label{fig:code of mesh}
\end{figure}
\par
Next, we define the set of mesh points $\left\{ (x_{i},y_{j})\in\Omega:i,\,j=0,\,\cdots,\,N  \right\}$
\begin{numcases}{x_{i}=}
2i(1-\lambda_{x})/N &\text{for $i=0,\,\cdots,\,N/2$}, \nonumber\\
1-2(N-i)\lambda_{x}/N &\text{for $i=N/2+1,\,\cdots,\,N$}\nonumber
\end{numcases}
and
\begin{numcases}{y_{j}=}
2j(1-\lambda_{y})/N &\text{for $j=0,\,\cdots,\,N/2$}, \nonumber\\
1-2(N-j)\lambda_{y}/N &\text{for $j=N/2+1,\,\cdots,\,N$}.\nonumber
\end{numcases}
By drawing lines through these mesh points parallel to the $x$-axis and $y$-axis the domain $\Omega$ is partitioned into rectangles. Each rectangle is divided into two triangles by drawing the diagonal.
% which runs from $(x_i,y_{j+1})$ to $(x_{i+1},y_{j})$. 
 This yields a triangulation of $\Omega$  denoted by $\mathcal{T}_{N}$(see Fig. \ref{Shishkin mesh}).
The mesh sizes $h_{x,i}:=x_{i+1}-x_{i}$ and $h_{y,j}:=y_{j+1}-y_{j}$ satisfy
\begin{numcases}{h_{x,i}=}
H_{x}:=\frac{1-\lambda_{x}}{N/2}  &\text{for $i=0,\,\cdots,\,N/2-1$}, \nonumber\\
h_{x}:=\frac{\lambda_{x}}{N/2}  &\text{for $i=N/2,\,\cdots,\,N-1$} \nonumber
\end{numcases}
and
\begin{numcases}{h_{y,j}=}
H_{y}:=\frac{1-\lambda_{y}}{N/2}  &\text{for $j=0,\,\cdots,\,N/2-1$},\nonumber \\
h_{y}:=\frac{\lambda_{y}}{N/2}  &\text{for $j=N/2,\,\cdots,\,N-1$}.\nonumber
\end{numcases}
The mesh sizes $H_x,H_y,h_x$ and $h_{y}$ satisfy
\begin{equation*}
N^{-1}\le H_{x},H_{y} \le 2N^{-1} \quad \mbox{and} \quad
C_{1}\varepsilon N^{-1}\ln N \le h_{x},h_{y}\le C_{2}\varepsilon N^{-1}\ln N.
\end{equation*}
%where $C_{1}$ and $C_{2}$ are positive constants and independent of $\varepsilon$
%and $N$.

For convenience, we shall use the following notations: $K^1_{i,j}$ for the mesh triangle with vertices $(x_i,y_j)$, $(x_{i+1},y_j)$, and $(x_i,y_{j+1})$;  $K^2_{i,j}$  for the mesh triangle with vertices
$(x_i,y_{j+1})$, $(x_{i+1},y_j)$, and $(x_{i+1},y_{j+1})$ (see Fig. \ref{fig:code of mesh}); $K$ for a generic mesh triangle.

On the above Shishkin meshes we define a $C^0$ linear finite element space
\begin{equation*}
V^{N}:=\{v^{N}\in C(\bar{\Omega}):v^{N}|_{\partial\Omega}=0
 \text{ and $v^{N}|_{K}\in P_{1}(K)$, }
  \forall K\in \mathcal{T}_{N} \}.
\end{equation*}

Now we are in a position to state the SDFEM. Let $V:=H^{1}_{0}(\Omega)$ and define
the bilinear forms
\begin{align*}
a_{Gal}(v,w)&=\varepsilon (\nabla v,\nabla w)+(\boldsymbol{b}\cdot\nabla v,w)
+(cv,w) \quad v,w\in V;\\
a_{stab}(v,w)&=\sum_{K\subset\Omega}(-\varepsilon\Delta v+\boldsymbol{b}\cdot\nabla v+cv,\delta_{K}\boldsymbol{b}\cdot\nabla w)_{K} 
\quad v\in V\cap H^2(\mathcal{T}_N),\;w\in V;\\
a_{SD}(v,w)&=a_{Gal}(v,w)+a_{stab}(v,w) \quad v\in V\cap H^2(\mathcal{T}_N),\;w\in V,
\end{align*}
where 
$H^2(\mathcal{T}_N)=\{ v\in L^2(\Omega): \forall K\in \mathcal{T}_N,\; v|_K\in H^2(K)  \}$. The standard SDFEM reads: 
\begin{equation}\label{eq:SDFEM}
\left\{
\begin{array}{lr}
\text{Find $u^{N}\in V^{N}$ such that for all $v^{N}\in V^{N}$},\\
 a_{SD}(u^{N},v^{N})=(f,v^{N})+\underset{K\subset\Omega}\sum(f,\delta_{K}\boldsymbol{b}\cdot\nabla v^{N})_{K}.
\end{array}
\right.
\end{equation}
Note that $\Delta u^N=0$ in $K$ for $u^N\vert_K\in P_1(K)$. Following usual practice
\cite{Roo1Sty2Tob3:2008-Robust},  the parameter $\delta_{K}:=\delta|_K$ is defined as follows:
\begin{equation}\label{eq: delta-K}
\delta_{K}=
\left\{
\begin{array}{cc}
C^{\ast}N^{-1}
%\le  \frac{\mu_{0} } {4c^{2}_{K}}
&\text{if $K\subset\Omega_{s}$},\\
0&\text{otherwise},
\end{array}
\right.
\end{equation}
and  $C^{\ast}$ is a properly defined positive constant such that the following coercivity holds  (see \cite[Lemma 3.25]{Roo1Sty2Tob3:2008-Robust}):
\begin{equation}\label{eq:coercivity}
a_{SD}(v^{N},v^{N})\ge \frac{1}{2} \vvvert v^{N} \vvvert^2 
\quad \forall v^{N}\in V^{N},
\end{equation}
where
%the streamline diffusion norm $ \vvvert\cdot  \vvvert$  associated with $a_{SD}(\cdot,\cdot)$:
\begin{equation}\label{eq:SD norm}
 \vvvert v^{N}  \vvvert^{2}:=
\varepsilon \vert v^{N} \vert^{2}_{1}+\Vert v^{N} \Vert^{2}
+\sum_{K\subset\Omega} \delta_{K}\Vert
\b{b}\cdot\nabla v^{N}\Vert^{2}_{K}.
\end{equation}
Coercivity \eqref{eq:coercivity} implies a unique solution of the discrete problem \eqref{eq:SDFEM}. Also the Galerkin orthogonality holds, i.e.,
\begin{equation}\label{eq:orthogonality of SD}
a_{SD}(u-u^N,v^N)=0 \quad \forall v^N\in V^N.
\end{equation}

Set
\begin{equation*}
    b:=\sqrt{b^{2}_{1}+b^{2}_{2}},\quad
    \boldsymbol{\beta}:=\genfrac(){0cm}{0}{b_{1}}{b_{2}}/b,\quad
    \boldsymbol{\eta}:=\genfrac(){0cm}{0}{-b_{2}}{b_{1}}/b.
\end{equation*}
For our later analysis, we define a mesh subdomain of $\Omega$ for each mesh node $\boldsymbol{x}^{\ast}=(x^*,y^*)$:
\begin{equation}\label{eq:Omega'0}
\Omega^{\prime}_{0}:=\Omega^{\prime}_{0}(\b{x}^*)=\{
K\in\mathcal{T}_{N}:\mbox{meas}(\Omega_{0}\cap K)\ne0
\},
\end{equation}
where 
\begin{equation}\label{eq:Omega0}
\begin{split}
\Omega_{0}:=\Omega_{0}(\b{x}^*)=\big\{ \boldsymbol{x}=(x,y)\in\Omega:&\;(\boldsymbol{x}-\boldsymbol{x}^{*})\cdot\boldsymbol{\beta}\le \mathscr{K} \sigma_{\beta}\ln N \text{ and } \\
&\vert (\boldsymbol{x}-\boldsymbol{x}^{*})\cdot\boldsymbol{\eta} \vert
\le \mathscr{K} \sigma_{\eta}\ln N \big\} 
\end{split}
\end{equation}
 (see Fig. \ref{Omega-0} and Fig. \ref{Omega'-0})
and
\begin{equation}\label{eq:sigma-beta-eta}
\sigma_{\beta}=kN^{-1}\ln N,\;\;\sigma_{\eta}=kN^{-1/2}.
\end{equation}
We shall choose $k>0$ and $\mathscr{K}>0$ later, which are independent of $N$ and $\varepsilon$. Note that
\begin{equation}\label{eq:area of influence domain}
\mr{meas}(\Omega^{\prime}_{0})\le C \sigma_{\eta} \ln N.
\end{equation}

\begin{figure}
\begin{minipage}[t]{0.5\linewidth}
\centering
\includegraphics[width=2.3in]{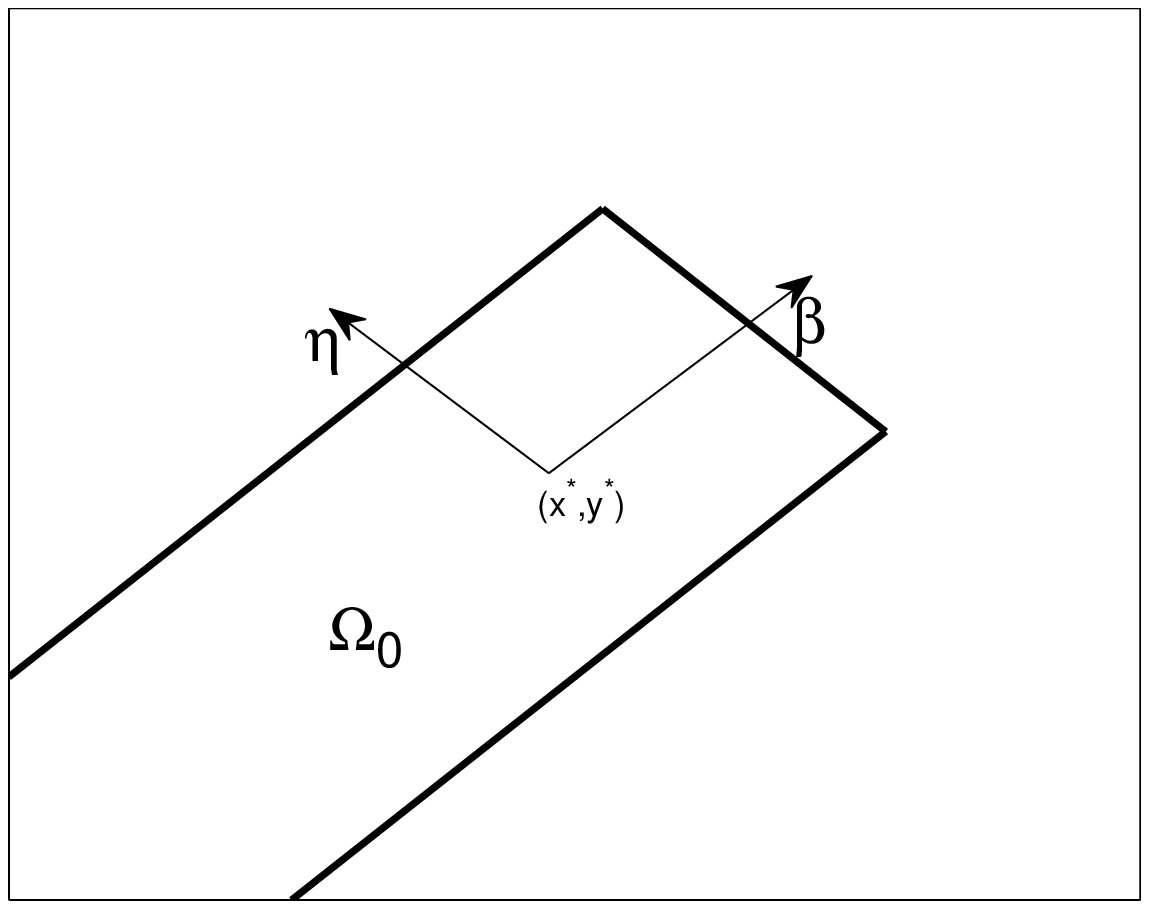}
\caption{Subdomain $\Omega_0=\Omega_0(\b{x}^*)$}
\label{Omega-0}
\end{minipage}%
\begin{minipage}[t]{0.5\linewidth}
\centering
\includegraphics[width=2.4in]{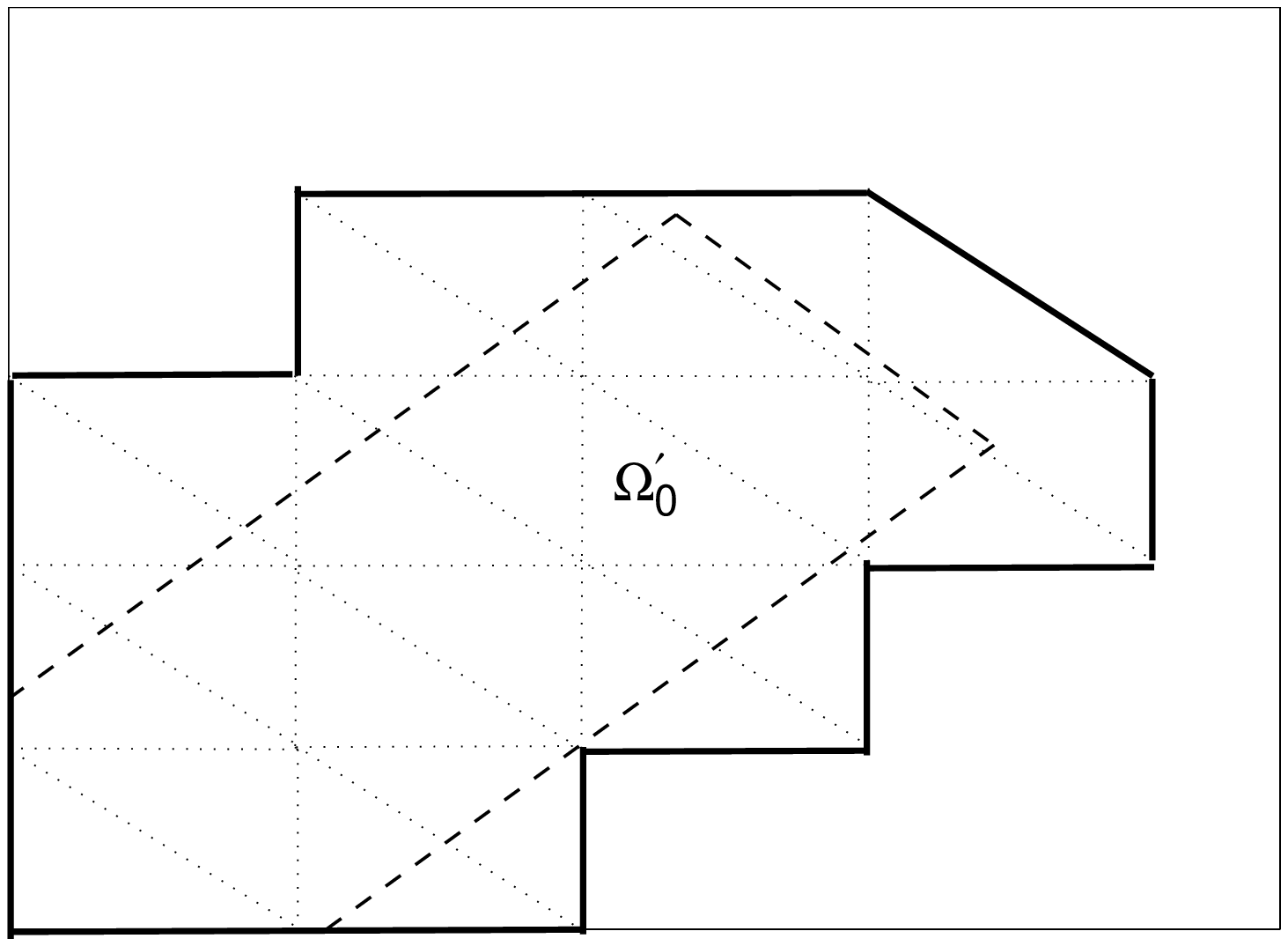}
\caption{Subdomain $\Omega'_0=\Omega'_0(\b{x}^*)$}
\label{Omega'-0}
\end{minipage}
\end{figure}
%%%%%%%%%%%%%%%%%%%%%%%%%%%%%%%%%%%%%%%%%%%%%%
\section{The discrete Green's function}

In this section, we introduce the discrete Green's  function and cite some results from \cite{Linb1Styn2:2001-SDFEM}.

Let $\boldsymbol{x}^{\ast}=(x^*,y^*)$ be a mesh node in $\Omega$. The discrete Green's function $G\in V^{N}$ associated with $\boldsymbol{x}^{\ast}$ is defined by
\begin{equation}\label{eq:discrete Green function}
a_{SD}(v^{N},G)=v^{N}(\boldsymbol{x}^{\ast})\quad \forall v^{N}\in V^{N}.
\end{equation}

In the following analysis, we present the energy estimation  of discrete Green's functions. Our analysis is similar to that of \cite{Linb1Styn2:2001-SDFEM}, with some changes.

We define a weight function 
\begin{equation*}
\omega(\boldsymbol{x}):=
g\left(\frac{(\boldsymbol{x}-\boldsymbol{x}^{\ast})\cdot\boldsymbol{\beta}}{\sigma_{\beta}}\right)
g\left(\frac{(\boldsymbol{x}-\boldsymbol{x}^{\ast})\cdot\boldsymbol{\eta}}{\sigma_{\eta}}\right)
g\left(-\frac{(\boldsymbol{x}-\boldsymbol{x}^{\ast})\cdot\boldsymbol{\eta}}{\sigma_{\eta}}\right)
\end{equation*}
with $g(r)=2/(1+e^{r})$ for $r\in(-\infty,\infty)$. 
We shall choose $k>0$ later. 
Note Lemma 4.1 in \cite{Linb1Styn2:2001-SDFEM} holds if $\sigma_{\beta}\ge k N^{-1}\ln N$ and $\sigma_{\eta}\ge k N^{-1/2}$.

Now we define a weighted energy norm 
\begin{equation}\label{eq:weighted norm}
\begin{split}
 \vvvert G \vvvert^{2}_{\omega}:= &\varepsilon \Vert \omega^{-1/2}G_{\beta} \Vert^{2}
+\varepsilon\Vert \omega^{-1/2}G_{\eta} \Vert^{2}+\frac{b}{2}\Vert (\omega^{-1})^{1/2}_{\beta} G\Vert^{2}\\
&+c\Vert \omega^{-1/2}G \Vert^{2}
+\sum_K b^{2}\delta_K\Vert \omega^{-1/2}G_{\beta} \Vert^{2}_K.
\end{split}
\end{equation}
Note that $(\omega^{-1})_{\beta}>0$. For any subdomain $D$ of $\Omega$, let
$\vvvert G \vvvert_{\omega,D}$ mean that the integrations in \eqref{eq:weighted norm} are restricted to $D$.
From \eqref{eq:SDFEM}, \eqref{eq:weighted norm} and integration by parts, we have
\begin{equation*}\label{eq:norm and bilinear form}
\begin{split}
\vvvert G \vvvert^{2}_{\omega}
=&a_{SD}(\omega^{-1}G,G)-\varepsilon((\omega^{-1})_{\beta}G,G_{\beta})
- \varepsilon ((\omega^{-1})_{\eta}G,G_{\eta})\\
&
-\sum_K  ( b(\omega^{-1})_{\beta}G+c \omega^{-1}G,\delta_K \; bG_{\beta})_K.
\end{split}
\end{equation*}
Considering \eqref{eq:discrete Green function}, we  have
\begin{equation*}\label{eq:idea-weighted estimate}
\begin{split}
a_{SD}(\omega^{-1}G,G)&=a_{SD}(\omega^{-1}G-(\omega^{-1}G)^{I},G)+a_{SD}((\omega^{-1}G)^{I},G)\\
&=a_{SD}(\omega^{-1}G-(\omega^{-1}G)^{I},G)+(\omega^{-1}G)(\boldsymbol{x}^{\ast}).
\end{split}
\end{equation*}
 By means of the above two equalities, the  energy estimate of $G$ will be obtained from the next three Lemmas.

\begin{lemma}\label{lem:1}
Assume $\sigma_{\beta}\ge kN^{-1} $ and $\sigma_{\eta}\ge k \varepsilon^{1/2}$  in \eqref{eq:Omega0}, then for $k>1$ sufficiently
large and independent of $N$ and $\varepsilon$, we have
\begin{equation*}
a_{SD}(\omega^{-1}G,G)\ge \frac{1}{4}\vvvert G \vvvert^{2}_{\omega}.
\end{equation*}
\end{lemma}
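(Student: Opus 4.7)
The plan is to expand $a_{SD}(\omega^{-1}G,G)$ by the product rule and collect from each term a ``leading'' contribution that already appears in $\vvvert G\vvvert^2_\omega$, together with ``correction'' pieces involving $\nabla\omega^{-1}$ or $\Delta\omega^{-1}$ that must be absorbed. The central quantitative input is the pointwise control coming from $g(r)=2/(1+e^r)$, for which $|g'/g|\le 1$ and $|g''/g|\le 1$; this yields bounds such as $|(\omega^{-1})_\beta|\le \omega^{-1}/\sigma_\beta$, $|(\omega^{-1})_\eta|\le 2\omega^{-1}/\sigma_\eta$, and $|\Delta\omega^{-1}|\le C(\sigma_\beta^{-2}+\sigma_\eta^{-2})\omega^{-1}$. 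Under the hypotheses $\sigma_\beta\ge kN^{-1}$, $\sigma_\eta\ge k\varepsilon^{1/2}$, together with Assumption~\ref{assum:varepsilon} ($\varepsilon\le N^{-1}$), every ratio of the form $\varepsilon\sigma_\eta^{-2}$, $\varepsilon\sigma_\beta^{-2}$, $\varepsilon/\sigma_\beta$, $\delta_K/\sigma_*$, etc., is $O(k^{-1})$ or smaller, so the corrections can be made arbitrarily small as $k\to\infty$.

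For $a_{Gal}(\omega^{-1}G,G)$, the diffusion term yields $\varepsilon\|\omega^{-1/2}G_\beta\|^2+\varepsilon\|\omega^{-1/2}G_\eta\|^2+\varepsilon(G\nabla\omega^{-1},\nabla G)$, and the reaction term gives $c\|\omega^{-1/2}G\|^2$ exactly. For the convective term I write
\[
(\b{b}\cdot\nabla(\omega^{-1}G),G)=b((\omega^{-1})_\beta G,G)+b(\omega^{-1}G_\beta,G),
\]
and integrate $b(\omega^{-1}G_\beta,G)=\tfrac{b}{2}\int_\Omega \omega^{-1}(G^2)_\beta$ by parts in the $\b{\beta}$-direction. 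Since $G=0$ on $\partial\Omega$, this equals $-\tfrac{b}{2}\|(\omega^{-1})^{1/2}_\beta G\|^2$, and the net convective contribution is the desired $\tfrac{b}{2}\|(\omega^{-1})^{1/2}_\beta G\|^2$.

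For $a_{stab}$, the identity $\Delta G|_K=0$ gives $-\varepsilon\Delta(\omega^{-1}G)=-\varepsilon G\Delta\omega^{-1}-2\varepsilon\nabla\omega^{-1}\cdot\nabla G$ on each $K$, while $\b{b}\cdot\nabla(\omega^{-1}G)$ expands as before. The only ``leading'' piece is
\[
\sum_{K}\delta_K(b\omega^{-1}G_\beta,bG_\beta)_K=\sum_K b^2\delta_K\|\omega^{-1/2}G_\beta\|^2_K,
\]
which matches the SD portion of $\vvvert G\vvvert^2_\omega$; every remaining piece carries an extra factor $\delta_K\le CN^{-1}$ together with one of the small derivatives of $\omega^{-1}$ listed above.

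Finally, I would bound each correction by Cauchy--Schwarz followed by Young's inequality, distributing the pieces across the five good norms in $\vvvert G\vvvert^2_\omega$ so that each good norm loses at most $3/4$ of itself. For instance, $\varepsilon(G\nabla\omega^{-1},\nabla G)$ splits into a $\beta$-part, absorbed partly by $\varepsilon\|\omega^{-1/2}G_\beta\|^2$ and partly by $\tfrac{b}{2}\|(\omega^{-1})^{1/2}_\beta G\|^2$ (using $\varepsilon/\sigma_\beta=O(k^{-1})$), and an $\eta$-part, absorbed by $\varepsilon\|\omega^{-1/2}G_\eta\|^2$ and $c\|\omega^{-1/2}G\|^2$ (using $\varepsilon\sigma_\eta^{-2}=O(k^{-2})$). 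Choosing $k$ large enough so that every residual coefficient is $\le 1/4$ yields the asserted estimate. The main obstacle is bookkeeping: several corrections compete for the same good norm (especially pieces that can only be absorbed by the SD term $\sum_K b^2\delta_K\|\omega^{-1/2}G_\beta\|^2_K$), so the Young splits must be balanced carefully; each individual estimate, however, is a routine AM--GM calculation.
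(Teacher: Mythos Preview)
Your outline is the standard argument the paper defers to; the paper's own proof is simply a citation of \cite[Lemma~4.2]{Linb1Styn2:2001-SDFEM}, and what you have sketched is precisely that computation.

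One slip to fix: under the hypotheses $\sigma_\beta\ge kN^{-1}$ and $\varepsilon\le N^{-1}$, the ratio $\varepsilon\sigma_\beta^{-2}$ is \emph{not} $O(k^{-1})$; it can be as large as $k^{-2}N$, so strike it from your list of small quantities. Fortunately it never arises unmultiplied in your bookkeeping. In the Galerkin $\beta$-correction $\varepsilon((\omega^{-1})_\beta G,G_\beta)$ you correctly invoke $\varepsilon\sigma_\beta^{-1}=O(k^{-1})$ (split via $\|(\omega^{-1})_\beta^{1/2}G\|$ and $\varepsilon\|\omega^{-1/2}G_\beta\|$), and in the stabilization Laplacian piece the combination that appears is $\varepsilon\delta_K\sigma_\beta^{-2}\le C\varepsilon N^{-1}\cdot k^{-2}N^{2}=O(k^{-2})$. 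With that one item corrected, the rest of your absorption scheme goes through exactly as you describe.
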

\begin{proof}
See \cite[Lemma 4.2]{Linb1Styn2:2001-SDFEM}.
\end{proof}

\begin{lemma}\label{lem:2}
If $\sigma_{\beta}\ge kN^{-1}$ in \eqref{eq:Omega0}, with $k>0$ independent of $N$ and $\varepsilon$, then for each mesh point $\boldsymbol{x}^{\ast}\in\Omega\setminus\Omega_{xy}$, we have
\begin{equation*}
\left|(\omega^{-1}G)(\boldsymbol{x}^{\ast})\right|\le
\frac{1}{16}\vvvert G \vvvert^{2}_{\omega}
+
\left\{
\begin{matrix}
CN^2\sigma_{\beta}& \text{if $\b{x}^*\in\Omega_s$}\\
CN\ln N& \text{if $\b{x}^*\in\Omega_x\cup\Omega_y$}
\end{matrix}
\right.
.
\end{equation*}
where $C$ is independent of $N$, $\varepsilon$ and $\boldsymbol{x}^{\ast}$.
\end{lemma}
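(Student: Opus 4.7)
The key observation is that $\omega(\boldsymbol{x}^*) = g(0)^3 = 1$, so $(\omega^{-1}G)(\boldsymbol{x}^*) = G(\boldsymbol{x}^*)$; it therefore suffices to produce a bound $|G(\boldsymbol{x}^*)|^2 \le M\,\vvvert G\vvvert_\omega^2$ for a suitable mesh-dependent $M$, and to close via Young's inequality $ab \le \tfrac{a^2}{16} + 4b^2$ with $a = \vvvert G\vvvert_\omega$. The starting point is an $L^2$-bound on $G$ on a neighborhood of $\boldsymbol{x}^*$ extracted from the positive term $\tfrac{b}{2}\|(\omega^{-1})^{1/2}_\beta G\|^2$ inside $\vvvert G\vvvert_\omega^2$. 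Writing $\omega = g(s_1)\,g(s_2)g(-s_2)$ with $s_1 = (\boldsymbol{x}-\boldsymbol{x}^*)\cdot\boldsymbol{\beta}/\sigma_\beta$, $s_2 = (\boldsymbol{x}-\boldsymbol{x}^*)\cdot\boldsymbol{\eta}/\sigma_\eta$, a direct computation using $g(r)=2/(1+e^r)$ and $g'(0)=-\tfrac12$ gives
\[
(\omega^{-1})_\beta = \frac{e^{s_1}}{2\sigma_\beta}\bigl[g(s_2)g(-s_2)\bigr]^{-1} \ge \frac{c}{\sigma_\beta}
\]
on a rectangular box $R$ around $\boldsymbol{x}^*$ of size $O(\sigma_\beta)\times O(\sigma_\eta)$ on which $\omega$ is also comparable to $1$. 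Consequently
\[
\int_R G^2\,dx \le C\sigma_\beta\,\vvvert G\vvvert_\omega^2. \qquad (\ast)
\]

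For the case $\boldsymbol{x}^*\in\Omega_s$, the mesh triangle $K^*$ with vertex $\boldsymbol{x}^*$ is essentially isotropic with $|K^*|\sim N^{-2}$, and, since $\sigma_\beta\gtrsim N^{-1}\gtrsim H_x$ and $\sigma_\eta\gtrsim N^{-1/2}\gtrsim H_y$, it lies in $R$. The standard $P_1$ inverse inequality combined with $(\ast)$ then yields
\[
|G(\boldsymbol{x}^*)|^2 \le C|K^*|^{-1}\|G\|_{L^2(K^*)}^2 \le CN^2\sigma_\beta\,\vvvert G\vvvert_\omega^2,
\]
and Young's inequality produces $|G(\boldsymbol{x}^*)| \le \tfrac{1}{16}\vvvert G\vvvert_\omega^2 + CN^2\sigma_\beta$ as claimed.

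For $\boldsymbol{x}^*\in\Omega_x\cup\Omega_y$, the triangle $K^*$ is highly anisotropic ($|K^*|\sim\varepsilon N^{-2}\ln N$ in $\Omega_x$), and the 2D inverse inequality would introduce a spurious $\varepsilon^{-1}$. I will switch to a directional argument along the \emph{coarse} mesh axis, using that $G$ vanishes on a portion of $\partial\Omega$ reachable at bounded distance along that axis. For $\boldsymbol{x}^*\in\Omega_x$ the $y$-direction is coarse ($H_y\sim N^{-1}$) and $G(x^*,0)=0$; along the vertical mesh line $\{x=x^*\}$ the function $G(x^*,\cdot)$ is a 1D piecewise-linear function with $G(x^*,0)=0$, so by Cauchy--Schwarz
\[
|G(\boldsymbol{x}^*)|^2 \le y^*\int_0^{y^*} G_y(x^*,s)^2\,ds.
\]
This 1D integral will be converted into a 2D one by averaging over a narrow $x$-strip (exploiting that $G_y$ is piecewise constant on triangles), and then bounded using $\varepsilon\|\omega^{-1/2}G_\eta\|^2\le 2\vvvert G\vvvert_\omega^2$ together with the exponential growth of $\omega^{-1}$ in the $\boldsymbol{\eta}$-direction, which concentrates the effective contribution in an $O(\sigma_\eta)$-neighborhood of $\boldsymbol{x}^*$ where $(\ast)$ applies. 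The resulting bound is $|G(\boldsymbol{x}^*)|^2 \le CN\ln N\,\vvvert G\vvvert_\omega^2$, and Young's inequality closes the case. The $\Omega_y$ case is symmetric.

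The main obstacle is precisely the anisotropic case $\Omega_x\cup\Omega_y$: on the small triangle $K^*$ the 2D inverse inequality loses a factor of $\varepsilon^{-1}$, forcing a 1D directional argument along the coarse axis. The delicate bookkeeping is the interplay between the weight $\omega$ (which is $O(1)$ only on an $O(\sigma_\eta)$-neighborhood of $\boldsymbol{x}^*$ and grows exponentially along $\boldsymbol{\eta}$), the vanishing of $G$ at a part of $\partial\Omega$ a distance $O(1)$ away, and the element-wise constancy of the discrete derivative $G_y$ across mesh edges.
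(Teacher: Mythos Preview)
The paper itself gives no proof and simply refers to Lemma~4.3 of Lin\ss--Stynes, so there is no in-paper argument to compare against line by line; what follows compares your sketch with the standard argument underlying that citation.

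Your treatment of the case $\boldsymbol{x}^{*}\in\Omega_{s}$ is correct and is essentially the Lin\ss--Stynes argument: from the lower bound $(\omega^{-1})_{\beta}\ge c\sigma_{\beta}^{-1}$ near $\boldsymbol{x}^{*}$ one extracts $\Vert G\Vert_{K^{*}}^{2}\le C\sigma_{\beta}\vvvert G\vvvert_{\omega}^{2}$, the $P_{1}$ inverse inequality on the isotropic element $K^{*}$ gives $|G(\boldsymbol{x}^{*})|^{2}\le CN^{2}\sigma_{\beta}\vvvert G\vvvert_{\omega}^{2}$, and Young's inequality closes.

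The layer case $\boldsymbol{x}^{*}\in\Omega_{x}\cup\Omega_{y}$, however, has a genuine gap. You integrate along the \emph{coarse} direction, e.g.\ for $\boldsymbol{x}^{*}\in\Omega_{x}$ you write $|G(\boldsymbol{x}^{*})|^{2}\le y^{*}\int_{0}^{y^{*}}G_{y}(x^{*},s)^{2}\,ds$ and then average over an $x$-strip of width $h_{x}\sim\varepsilon N^{-1}\ln N$. The problem is that on $\Omega_{x}$ one has $\delta_{K}=0$, so the only control on $\nabla G$ in $\vvvert G\vvvert_{\omega}$ is $\varepsilon\Vert\omega^{-1/2}\nabla G\Vert^{2}$. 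Thus $\int_{\text{strip}}G_{y}^{2}\le C\varepsilon^{-1}\vvvert G\vvvert_{\omega}^{2}$, and after the dimensional conversion you obtain $|G(\boldsymbol{x}^{*})|^{2}\le C\,y^{*}\,h_{x}^{-1}\varepsilon^{-1}\vvvert G\vvvert_{\omega}^{2}\sim CN\varepsilon^{-2}(\ln N)^{-1}\vvvert G\vvvert_{\omega}^{2}$, which is off by a factor $\varepsilon^{-2}$. The appeal to ``exponential growth of $\omega^{-1}$ in the $\boldsymbol{\eta}$-direction'' does not help here: that growth makes the weighted gradient norm \emph{larger}, not smaller, and $(\ast)$ bounds $\Vert G\Vert_{L^{2}}$, not $\Vert G_{y}\Vert_{L^{2}}$.

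The correct (Lin\ss--Stynes) route for $\boldsymbol{x}^{*}\in\Omega_{x}$ integrates along the \emph{fine} direction toward the nearby outflow boundary $x=1$: from $G(1,y)=0$ one gets $|G(x^{*},y)|^{2}\le\lambda_{x}\int_{x^{*}}^{1}G_{x}^{2}(s,y)\,ds$, and averaging over a $y$-interval of length $H_{y}$ together with a 1D inverse inequality on the coarse edge yields
\[
|G(\boldsymbol{x}^{*})|^{2}\le CH_{y}^{-1}\lambda_{x}\Vert G_{x}\Vert_{L^{2}(S)}^{2}\le CN\cdot\varepsilon\ln N\cdot\varepsilon^{-1}\vvvert G\vvvert_{\omega}^{2}=CN\ln N\,\vvvert G\vvvert_{\omega}^{2},
\]
where $S=[x^{*},1]\times[y^{*}-H_{y},y^{*}]$ has diameter $O(N^{-1})$ so that $\omega\sim1$ on $S$. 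The crucial point is that the short distance $\lambda_{x}\sim\varepsilon\ln N$ supplies exactly the factor of $\varepsilon$ needed to cancel the $\varepsilon^{-1}$ coming from the energy norm; integration along the coarse axis over a distance $O(1)$ provides no such factor. The $\Omega_{y}$ case is symmetric with the roles of $x$ and $y$ exchanged.
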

\begin{proof}
See \cite[Lemma 4.3]{Linb1Styn2:2001-SDFEM}.
\end{proof}

\begin{lemma}\label{lem:3}
If $\sigma_{\beta}\ge kN^{-1}\ln N $ and $\sigma_{\eta}\ge k N^{-1/2}$ in \eqref{eq:Omega0}, where $k>1$ is sufficiently
large and independent of $N$ and $\varepsilon$, then
\begin{equation*}
a_{SD}((\omega^{-1}G)^{I}-\omega^{-1}G,G)\le \frac{1}{16}\vvvert G \vvvert^{2}_{\omega}.
\end{equation*}
\end{lemma}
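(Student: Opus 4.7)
The strategy is to split $a_{SD}$ into its Galerkin and streamline-diffusion parts,
$$a_{SD}(\chi,G) = \varepsilon(\nabla\chi,\nabla G) + (\b b\cdot\nabla\chi,G) + (c\chi,G) + \sum_K(-\varepsilon\Delta\chi+\b b\cdot\nabla\chi+c\chi,\delta_K\b b\cdot\nabla G)_K,$$
with $\chi := (\omega^{-1}G)^I-\omega^{-1}G$, and then to bound each of these six terms by (a small constant times) a piece of $\vvvert G\vvvert_\omega^2$. Since $G$ is piecewise linear, $\Delta\chi$ reduces on each $K$ to $-\Delta(\omega^{-1}G)|_K$, and the Leibniz rule gives $\partial^\alpha(\omega^{-1}G)=\sum \binom{\alpha}{\gamma}\partial^\gamma\omega^{-1}\,\partial^{\alpha-\gamma}G$, where second-order derivatives of $G$ on $K$ vanish. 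Thus every derivative of $\chi$ is controlled by $\omega^{-1}$ and its derivatives acting on $G,\nabla G$, and the key point is that $|\partial_\beta^j\partial_\eta^l\omega^{-1}|\le C\sigma_\beta^{-j}\sigma_\eta^{-l}\omega^{-1}$ on $\Omega_0'$, while outside $\Omega_0'$ the interpolant $(\omega^{-1}G)^I$ and $\omega^{-1}G$ may simply be bounded in a direct way since $\omega$ is exponentially small.

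The concrete plan is:
\textbf{(i)} Establish anisotropic interpolation estimates of the form
$$\|\chi\|_K \le C(h_{x,K}^2\|\partial_{xx}(\omega^{-1}G)\|_K + h_{x,K}h_{y,K}\|\partial_{xy}(\omega^{-1}G)\|_K + h_{y,K}^2\|\partial_{yy}(\omega^{-1}G)\|_K),$$
and analogous bounds for $\|\nabla\chi\|_K$ with one power of the mesh size, on both the coarse subdomain ($h\sim N^{-1}$) and the fine layer subdomains ($h\sim\varepsilon N^{-1}\ln N$).
\textbf{(ii)} Expand the second derivatives of $\omega^{-1}G$ via Leibniz: on each $K$, $\partial_i\partial_j(\omega^{-1}G)=(\partial_i\partial_j\omega^{-1})G+(\partial_i\omega^{-1})\partial_jG+(\partial_j\omega^{-1})\partial_iG$, and apply the pointwise bounds $|\partial_i\omega^{-1}|\lesssim\omega^{-1}/\min(\sigma_\beta,\sigma_\eta)$, $|\partial_i\partial_j\omega^{-1}|\lesssim\omega^{-1}/\min(\sigma_\beta,\sigma_\eta)^2$.
\textbf{(iii)} For each of the six terms, apply Cauchy--Schwarz separating $\omega^{-1/2}$ (which is absorbed into $\vvvert G\vvvert_\omega$) from an $\omega^{-1/2}$-weighted prefactor depending on $h,\varepsilon,\sigma_\beta,\sigma_\eta,\delta_K$; check that the prefactor is $O(k^{-1})$ or $O(k^{-2})$, so choosing $k$ large yields the factor $1/16$.

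\textbf{The main obstacle.} The delicate balance is between the streamline-diffusion term $\varepsilon\Delta\chi$ and the available energy from the SD-stabilization $\sum_K\delta_Kb^2\|\omega^{-1/2}G_\beta\|_K^2$. On the fine subdomain $\Omega_x\cup\Omega_y\cup\Omega_{xy}$ we have $\delta_K=0$, so no stabilization is available, but there $h_x$ (resp.~$h_y$) is small of order $\varepsilon N^{-1}\ln N$, which matches the powers of $\varepsilon$ needed to close the estimate. On the coarse subdomain $\Omega_s$, $\delta_K\sim N^{-1}$ compensates the larger mesh size. Keeping track of which direction is fine versus coarse, and in particular using the anisotropic inverse relationship $\varepsilon\lesssim h/\sigma_\beta$ in the right directions, is where the argument is most technical; this is exactly the place where the choice $\sigma_\beta\ge kN^{-1}\ln N$, $\sigma_\eta\ge kN^{-1/2}$ is used to absorb logarithms.

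Finally, for mesh nodes $\b x^*$ near the outflow corner $\Omega_{xy}$ one must be aware that $\Omega_0'$ may intersect both the layer and the regular regions, so the interpolation bounds are applied elementwise and summed separately over subdomains, after which the resulting pieces are combined into $\vvvert G\vvvert_\omega^2$ via \eqref{eq:weighted norm}. Choosing $k$ large enough then yields the factor $1/16$ and completes the proof.
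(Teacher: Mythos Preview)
Your outline has the right general shape (Leibniz expansion, anisotropic interpolation, weight bounds, Cauchy--Schwarz), but two concrete mechanisms that actually close the estimate are missing, and without them the argument fails to be uniform in $\varepsilon$ and $N$. First, the convection term $(\b b\cdot\nabla\chi,G)=b(\chi_\beta,G)$ cannot be handled by a direct Cauchy--Schwarz split as you propose: on $\Omega_s$ the best available bound is $\|\omega^{1/2}\chi_\beta\|_{\Omega_s}\le Ck^{-1}N^{1/2}\ln^{-1}N\,\vvvert G\vvvert_\omega$, and on $\Omega_x\cup\Omega_y$ it is $\le Ck^{-1}\varepsilon^{-1/2}\ln^{-1}N\,\vvvert G\vvvert_\omega$; pairing either against $\|\omega^{-1/2}G\|$ or $\|(\omega^{-1})_\beta^{1/2}G\|$ leaves an unbounded prefactor. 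The paper first integrates by parts, $b(\chi_\beta,G)=-b(\chi,G_\beta)$, so that the convection contribution is controlled by $\|(\varepsilon+b^2\delta)^{-1/2}\omega^{1/2}\chi\|\cdot\vvvert G\vvvert_\omega$ instead.

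Second, your claim that on the layer strips $\Omega_x\cup\Omega_y$ the mesh size ``is small of order $\varepsilon N^{-1}\ln N$, which matches the powers of $\varepsilon$'' is the real gap: only \emph{one} coordinate direction is fine there, the other remains $O(N^{-1})$. The straightforward interpolation bound therefore gives only $\|\omega^{1/2}\chi\|_{\Omega_x\cup\Omega_y}\le Ck^{-1}\varepsilon^{-1/2}N^{-1}\ln^{-1}N\,\vvvert G\vvvert_\omega$, and since $\delta=0$ on these strips one must multiply by $\varepsilon^{-1/2}$, producing $Ck^{-1}\varepsilon^{-1}N^{-1}\ln^{-1}N$, which blows up as $\varepsilon\to 0$. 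The paper fixes this with an additional Friedrichs/Poincar\'e-type step that exploits the strip width $\lambda_x\sim\varepsilon\ln N$: roughly $\|\omega^{1/2}\chi\|_{\Omega_x}^2\le C\lambda_x^2\bigl(\|(\omega^{1/2})_\beta\chi\|_{\Omega_x}^2+\|\omega^{1/2}\chi_\beta\|_{\Omega_x}^2\bigr)$, which upgrades the bound to $\|\omega^{1/2}\chi\|_{\Omega_x\cup\Omega_y}\le Ck^{-1}\varepsilon^{1/2}\vvvert G\vvvert_\omega$ and restores uniformity. This, not the $\varepsilon\Delta\chi$ term you flag, is the delicate step (that term is harmless: $\delta_K=0$ off $\Omega_s$, and on $\Omega_s$ the factor $\varepsilon\delta_K^{1/2}\le N^{-3/2}$ absorbs the second derivatives). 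Finally, splitting into $\Omega_0'$ versus its complement is unnecessary here; the weight argument is elementwise over all of $\Omega$ because $\max_K\omega/\min_K\omega\le C$ on every element.
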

\begin{proof}

For convenience we set
$\tilde{E}(\boldsymbol{x}):=((\omega^{-1}G)^{I}-\omega^{-1}G)(\boldsymbol{x})$.
Recall $b$ is constant and integration by parts yields $(b\tilde{E}_{\beta},G)=-(b\tilde{E},G_{\beta})$. Then we have
\begin{equation}\label{B(E,G)}
\begin{split}
|a_{SD}(\tilde{E},G)|
\le
C\big( &\Vert (\varepsilon+b^{2}\delta)^{1/2}\omega^{1/2}\tilde{E}_{\beta} \Vert
+\varepsilon^{1/2}\Vert \omega^{1/2}\tilde{E}_{\eta} \Vert\\
&+\Vert (\varepsilon+b^{2}\delta)^{-1/2}\omega^{1/2}\tilde{E} \Vert
\big) \vvvert  G \vvvert_{\omega}.
\end{split}
\end{equation}

To analyze different kinds of interpolation bounds, we first estimate the following terms. Note that $G_{\beta\beta}=G_{\eta\eta}=G_{\beta\eta}=0$ on $K$
because $G$ belongs to $ V^{N}$. 
For convenience, we set $M_{K}:=\underset{K}{\max}\,\omega^{-1/2}$.
Using   (iii), (iv) and (v) in \cite[Lemma 4.1]{Linb1Styn2:2001-SDFEM}, we obtain
\begin{equation}\label{eq:omega-G-xx}
\begin{split}
&\Vert (\omega^{-1}G)_{\beta\beta} \Vert_{K}
\le
\Vert(\omega^{-1})_{\beta\beta}G \Vert_{K}+\Vert(\omega^{-1})_{\beta}G_{\beta}\Vert_{K}\\
\le&
CM_{K}
\left(  
\sigma^{-3/2}_{\beta}\Vert (\omega^{-1})^{1/2}_{\beta}G \Vert_{K}+
\sigma^{-1}_{\beta}\Vert \omega^{-1/2} G_{\beta} \Vert_{K}
\right)\\
\le&
CM_{K}
\left( 
\sigma^{-3/2}_{\beta}+
\sigma^{-1}_{\beta}(\varepsilon+b^{2}\delta)^{-1/2}
\right)
\vvvert G \vvvert_{\omega,K}.
\end{split}
\end{equation}
Note
$\Vert     G_{ \eta } \Vert_{K}\le C \max\{ h^{-1}_{x,K}, h^{-1}_{y,K} \} \Vert     G  \Vert_{K}$
or
$\Vert     G_{ \eta } \Vert_{K}\le C \varepsilon^{-1/2}\cdot \varepsilon^{1/2}\Vert     G_{\eta}  \Vert_{K}$,
then we have
$$
\Vert     G_{ \eta } \Vert_{K}\le C\min\{   \max\{ h^{-1}_{x,K}, h^{-1}_{y,K} \}, \varepsilon^{-1/2} \} 
\vvvert G \vvvert_{K},
$$
and
\begin{equation}\label{eq:omega-y-G-y}
\begin{split}
&\Vert  (\omega^{-1})_{  \eta  }G_{  \eta  } \Vert_{K} 
\le 
C\underset{K}{\max} |(\omega^{-1})_{  \eta  }|\;   \Vert     G_{  \eta  } \Vert_{K} \\
\le &
CM_{K}\sigma^{-1}_{  \eta  }  \underset{K}{\max}\omega^{-1/2} \cdot 
\min\{   \max\{ h^{-1}_{x,K}, h^{-1}_{y,K} \}, \varepsilon^{-1/2} \} 
\vvvert G \vvvert_{K}\\
\le &
C M_{K} \sigma^{-1}_{ \eta } \min\{   \max\{ h^{-1}_{x,K}, h^{-1}_{y,K} \}, \varepsilon^{-1/2} \} 
\vvvert G \vvvert_{\omega,K}.
\end{split}
\end{equation}
Similarly,  we have $\Vert(\omega^{-1})_{ \eta  \eta }G \Vert_{K}\le CM_{K} \sigma^{-2}_{ \eta }\Vert \omega^{-1/2}G \Vert_{K}$
and 
\begin{equation}\label{eq:omega-G-yy}
\begin{split}
&\Vert (\omega^{-1}G)_{ \eta  \eta } \Vert_{K}
\le
\Vert(\omega^{-1})_{ \eta  \eta }G \Vert_{K}+\Vert(\omega^{-1})_{ \eta }G_{ \eta }\Vert_{K}\\
\le &
CM_{K}
\left( \sigma^{-2}_{ \eta }+ \sigma^{-1}_{  \eta } \min\{   \max\{ h^{-1}_{x,K}, h^{-1}_{y,K} \}, \varepsilon^{-1/2} \}   \right)
\vvvert  G \vvvert_{\omega,K}.
\end{split}
\end{equation}
Recalling (v) in \cite[Lemma 4.1]{Linb1Styn2:2001-SDFEM} and inverse estimates, we have
\begin{equation}\label{eq:omega-x-G-y-I}
\begin{split}
&\Vert  (\omega^{-1})_{\beta }G_{ \eta}  \Vert_{K}
\le
C \underset{K}{\max} (\omega^{-1})_{\beta }\cdot  \Vert G_{ \eta}  \Vert_{K}\\
\le &
C \underset{K}{\max} (\omega^{-1})_{\beta }\cdot \max\{ h^{-1}_{x,K}, h^{-1}_{y,K} \} \cdot \Vert G  \Vert_{K} \\
\le &
C \max\{ h^{-1}_{x,K}, h^{-1}_{y,K} \}\left( \underset{K}{\max} (\omega^{-1})_{\beta } \right)^{1/2}
\left( \underset{K}{\min} (\omega^{-1})_{\beta } \right)^{1/2}
\cdot     \Vert G  \Vert_{K}\\
\le  &
CM_K \cdot \max\{ h^{-1}_{x,K}, h^{-1}_{y,K} \} \sigma^{-1/2}_{\beta}\cdot     \Vert (\omega^{-1})^{1/2}_{\beta } G  \Vert_{K}.
\end{split}
\end{equation}
Also, we have
\begin{equation}\label{eq:omega-x-G-y-II}
\Vert  (\omega^{-1})_{\beta }G_{ \eta}  \Vert_{K}
\le
CM_{K}\cdot
 \varepsilon^{-1/2}\sigma^{-1}_{ \beta}\cdot  \varepsilon^{1/2}\Vert  \omega^{-1/2}G_{ \eta} \Vert_{K}.
\end{equation}
Then from \eqref{eq:omega-x-G-y-I} and \eqref{eq:omega-x-G-y-II}, we have
\begin{equation*}
\Vert  (\omega^{-1})_{\beta }G_{ \eta}  \Vert_{K}
\le 
C M_{K} \min\{ \max\{ h^{-1}_{x,K}, h^{-1}_{y,K} \} \sigma^{-1/2}_{\beta},\varepsilon^{-1/2}\sigma^{-1}_{ \beta}\} \vvvert G \vvvert_{\omega,K},
\end{equation*}
and then
\begin{equation}\label{eq:omega-G-xy}
\begin{split}
&\Vert (\omega^{-1}G)_{\beta \eta} \Vert_{K}
\le
\Vert(\omega^{-1})_{\beta \eta}G \Vert_{K}+\Vert(\omega^{-1})_{\eta}G_{\beta }\Vert_{K}
+\Vert(\omega^{-1})_{\beta }G_{\eta}\Vert_{K}\\
\le &
CM_{K}
\left( \sigma^{-1/2}_{\beta}\sigma^{-1}_{ \eta }\Vert (\omega^{-1})^{1/2}_{\beta}G \Vert_{K}+
\sigma^{-1}_{ \eta }\Vert \omega^{-1/2} G_{\beta} \Vert_{K}
\right)+\Vert(\omega^{-1})_{\beta}G_{ \eta }\Vert_{K}\\
\le &
CM_{K}
\big(  \sigma^{-1/2}_{\beta}\sigma^{-1}_{ \eta }+
\sigma^{-1}_{ \eta }(\varepsilon+b^{2}\delta)^{-1/2}\\
&+ \min\{ \max\{ h^{-1}_{x,K}, h^{-1}_{y,K} \} \sigma^{-1/2}_{\beta},\varepsilon^{-1/2}\sigma^{-1}_{ \beta}\}  \big)
\vvvert  G \vvvert_{\omega,K}.
\end{split}
\end{equation}

Set  $h_{K}=\max\{h_{x,K},h_{y,K}\}$ and $\tilde{M}_K:= (\underset{K}{\max} \omega)^{1/2}$. From \cite[Corollary 3.1]{Linb1Styn2:2001-SDFEM}, we have
\begin{equation}\label{interpolation of Ex and Ey}
\begin{split}
&\Vert \omega^{1/2} \tilde{E} \Vert_{K}+h_{K}\Vert \omega^{1/2} \tilde{E}_{\beta} \Vert_{K}+h_{K}\Vert \omega^{1/2} \tilde{E}_{\eta} \Vert_{K}\\
\le & C \tilde{M}_K
  h^2_{K} \left( \Vert  (\omega^{-1}G)_{\beta\beta} \Vert_{K}+
\Vert  (\omega^{-1}G)_{\beta\eta} \Vert_{K} +\Vert  (\omega^{-1}G)_{\eta\eta} \Vert_{K} \right).
\end{split}
\end{equation}
Substituting \eqref{eq:omega-G-xx}, \eqref{eq:omega-G-yy} and \eqref{eq:omega-G-xy} into \eqref{interpolation of Ex and Ey},  we have
\begin{align}\label{Ex-Ey}
(\varepsilon+b^{2}\delta)\Vert \omega^{1/2}\tilde{E}_{\beta} \Vert^{2}+\varepsilon\Vert\omega^{1/2} \tilde{E}_{\eta} \Vert^{2}
&\le
Ck^{-2}\vvvert G \vvvert^{2}_{\omega}.
\end{align}
More precisely, we have
\begin{equation}\label{eq:Ex-Omega-x}
\begin{split}
\Vert \omega^{1/2}\tilde{E}_{\beta} \Vert_{ \Omega_x }
\le &
Ck^{-1}N^{-1}( \sigma^{-1}_{\beta}\varepsilon^{-1/2}   +\sigma^{-2}_{\eta}+\sigma^{-1}_{\eta} \varepsilon^{-1/2})\vvvert  G \vvvert_{\omega}\\
\le &
Ck^{-1} \varepsilon^{-1/2} \ln^{-1}N  \vvvert  G \vvvert_{\omega}.
\end{split}
\end{equation}

Substituting \eqref{eq:omega-G-xx}, \eqref{eq:omega-G-yy} and \eqref{eq:omega-G-xy} into \eqref{interpolation of Ex and Ey} again,  we have
\begin{equation}\label{E Omegas Omegay}
\Vert \omega^{1/2}\tilde{E}\Vert_{K}
\le
\left\{
\begin{array}{ll}
Ck^{-1}N^{-1/2}\vvvert  G \vvvert_{\omega,K}  &\text{if $K\subset\Omega_s$}\\
Ck^{-1}\varepsilon^{1/2}\vvvert  G \vvvert_{\omega,K}  &\text{if $K\subset\Omega_{xy}$}
\end{array}
\right.
,
\end{equation}
and  
\begin{equation}\label{E Omega-xy}
\Vert \omega^{1/2}\tilde{E}\Vert_{K}
\le
Ck^{-1} \varepsilon^{-1/2} N^{-1} \ln^{-1}N\vvvert  G \vvvert_{\omega,K}\;\;
\text{if $K\subset\Omega_x\cup\Omega_y$}.
\end{equation}

For what follows we need a sharper bound of $\Vert \omega^{1/2}\tilde{E} \Vert_{\Omega_x\cup\Omega_y }$.       Considering $\sigma_{\beta}\ge kN^{-1}\ln N$, $\sigma_{\eta}\ge k N^{-1/2}$, \eqref{E Omega-xy} and  \eqref{eq:Ex-Omega-x}, similar to \cite[Lemma 4.4]{Linb1Styn2:2001-SDFEM} we obtain  
\begin{equation}\label{E Omegax}
\begin{split}
&\Vert \omega^{1/2}\tilde{E} \Vert^{2}_{\Omega_x \cup\Omega_y }
\le
C\lambda^{2}_{x}
\left\{\Vert(\omega^{1/2})_{\beta}\tilde{E}\Vert^{2}_{\Omega_x \cup\Omega_y }+\Vert\omega^{1/2}\tilde{E}_{\beta}\Vert^{2}_{\Omega_x \cup\Omega_y }\right\}\\
\le &
C\varepsilon^{2}\ln^{2}N\cdot
\left\{\sigma^{-2}_{\beta}\Vert \omega^{1/2}\tilde{E} \Vert^{2}_{\Omega_x \cup\Omega_y }+\Vert\omega^{1/2}\tilde{E}_{\beta}\Vert^{2}_{\Omega_x\cup\Omega_y }\right\}\\
\le  &
Ck^{-2}\varepsilon^{2}\ln^{2}N
    \{\sigma^{-2}_{\beta} \varepsilon^{-1 } N^{-2} \ln^{-2}N +
\varepsilon^{-1 } \ln^{-2}N 
    \}\vvvert G \vvvert^{2}_{\omega}\\
\le     &
    Ck^{-2}\varepsilon  \vvvert G \vvvert^{2}_{\omega}.
    \end{split}
\end{equation}

\par
Substituting \eqref{Ex-Ey}, \eqref{E Omegas Omegay}   and  \eqref{E Omegax} into \eqref{B(E,G)} and recalling the definition
of $\delta$, we obtain
\begin{equation*}
|a_{SD}(\tilde{E},G)|\le
Ck^{-1}\vvvert G \vvvert^{2}_{\omega}.
\end{equation*}
Choosing $k$ sufficiently large independently
of $\varepsilon$ and $N$, we are done.
\end{proof}

Lemmas 
\ref{lem:1}, \ref{lem:2} and \ref{lem:3}
 yield the following bound of the discrete Green function in the energy norm just as in \cite[Theorem 4.1]{Linb1Styn2:2001-SDFEM}.
\begin{theorem}\label{theorem:energy estimate}
Assume  that $\sigma_{\beta}= kN^{-1}\ln N$ and $\sigma_{\eta}= k N^{-1/2}$ in \eqref{eq:Omega0}, where
$k$ is chosen so  that Lemmas \ref{lem:1}, \ref{lem:2} and \ref{lem:3} hold. Then for
$\boldsymbol{x}^{\ast}\in \Omega\setminus\Omega_{xy}$, we have
$$
 \vvvert G \vvvert  \le \sqrt{8}\vvvert G \vvvert_{\omega} \le 
CN^{1/2}\ln^{1/2} N.
$$
\end {theorem}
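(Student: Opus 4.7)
The plan is to combine Lemmas \ref{lem:1}--\ref{lem:3} with the identity
$$a_{SD}(\omega^{-1}G, G) = -a_{SD}\big((\omega^{-1}G)^I - \omega^{-1}G,\,G\big) + (\omega^{-1}G)(\b{x}^*),$$
which follows from testing the defining equation \eqref{eq:discrete Green function} of $G$ against $(\omega^{-1}G)^I \in V^N$ (this is essentially the second displayed identity just before Lemma \ref{lem:1}). The Lemma \ref{lem:1} lower bound gives $\tfrac14 \vvvert G \vvvert_\omega^2 \le a_{SD}(\omega^{-1}G,G)$, while Lemmas \ref{lem:3} and \ref{lem:2} bound the two pieces on the right by $\tfrac{1}{16}\vvvert G \vvvert_\omega^2$ and by $\tfrac{1}{16}\vvvert G \vvvert_\omega^2 + R(\b{x}^*)$ respectively, where $R(\b{x}^*) = CN^{2}\sigma_\beta$ for $\b{x}^* \in \Omega_s$ and $R(\b{x}^*) = CN\ln N$ for $\b{x}^* \in \Omega_x \cup \Omega_y$. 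Moving the two $\tfrac{1}{16}\vvvert G \vvvert_\omega^2$ contributions to the left gives $\tfrac{1}{8}\vvvert G \vvvert_\omega^2 \le R(\b{x}^*)$.

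Inserting the hypothesis $\sigma_\beta = kN^{-1}\ln N$ collapses the $\Omega_s$ case of $R$ to $CkN\ln N$, which matches the $\Omega_x \cup \Omega_y$ case in order. Since the theorem's hypothesis restricts $\b{x}^*$ to $\Omega\setminus\Omega_{xy}$, both sub-cases of Lemma \ref{lem:2} are available, and in either one I conclude $\vvvert G \vvvert_\omega^2 \le CN\ln N$, i.e.\ the second inequality $\vvvert G \vvvert_\omega \le CN^{1/2}\ln^{1/2}N$.

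For the first inequality $\vvvert G \vvvert \le \sqrt{8}\, \vvvert G \vvvert_\omega$ I would compare the unweighted norm \eqref{eq:SD norm} with the weighted norm \eqref{eq:weighted norm} term by term. Since $g(r) = 2/(1+e^r) \in (0,2)$ for every $r$, the product $\omega$ is bounded above by $8$, so $\omega^{-1} \ge 1/8$ pointwise. Combined with $|\nabla G|^2 = G_\beta^2 + G_\eta^2$ (because $(\b\beta,\b\eta)$ is orthonormal) and $\b b \cdot \nabla G = b\, G_\beta$, this lets me match each piece of $\vvvert G \vvvert^2$ with at most eight times the corresponding $\omega^{-1/2}$-weighted piece of $\vvvert G \vvvert_\omega^2$ (the $\varepsilon$-terms, the $c\|\omega^{-1/2}G\|^2$ term, and the streamline term $\sum_K b^2\delta_K\|\omega^{-1/2}G_\beta\|_K^2$); summation yields $\vvvert G \vvvert^2 \le 8\,\vvvert G \vvvert_\omega^2$.

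The main obstacle is not analytic but arithmetic: one must check that the coefficients $\tfrac14$, $\tfrac1{16}$, $\tfrac1{16}$ coming from the three lemmas really leave a strictly positive multiple of $\vvvert G \vvvert_\omega^2$ on the left after absorption, and this is precisely what the sufficiently-large-$k$ choices in Lemmas \ref{lem:1} and \ref{lem:3} (together with the $\tfrac1{16}$ constant built into Lemma \ref{lem:2}) were engineered to guarantee. Once these lemmas are in place, the only other thing to verify is that the substitution $\sigma_\beta = kN^{-1}\ln N$ indeed brings both sub-cases of $R(\b{x}^*)$ to the same $N\ln N$ order; the rest of the argument is mechanical.
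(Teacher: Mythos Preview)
Your proposal is correct and follows exactly the route the paper intends: the paper itself gives no detailed proof, merely stating that the bound follows from Lemmas~\ref{lem:1}--\ref{lem:3} ``just as in \cite[Theorem 4.1]{Linb1Styn2:2001-SDFEM}'', and your argument is precisely that combination (the identity before Lemma~\ref{lem:1}, the arithmetic $\tfrac14-\tfrac1{16}-\tfrac1{16}=\tfrac18$, the substitution $\sigma_\beta=kN^{-1}\ln N$, and the pointwise bound $\omega\le 8$ for the comparison $\vvvert G\vvvert\le\sqrt{8}\,\vvvert G\vvvert_\omega$). One small caveat: matching the $\|G\|^2$ term against $c\|\omega^{-1/2}G\|^2$ with the exact factor $8$ tacitly uses $c\ge 1$; for general $c>0$ the constant becomes $\sqrt{8/\min(c,1)}$, but this is harmless since the final bound carries a generic constant $C$ anyway.
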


\begin{remark}
For different crosswind diffusion coefficients, for example $\hat{\varepsilon}$ in 
\cite{Linb1Styn2:2001-SDFEM} and $\varepsilon$ in the present paper, we can choose $\sigma_{\beta}$ and $\sigma_{\eta}$ \emph{large} enough such that Lemmas \ref{lem:1}--\ref{lem:3} hold true. Thus with different assumptions on these parameters and  different SDFEMs, we can obtain bounds similar to \cite[Theorem 4.1]{Linb1Styn2:2001-SDFEM}, i.e., Theorem \ref{theorem:energy estimate}.
\end{remark}

%%%%%%%%%%%%%%%%%%%%%%%%%%%%%%%%%%%%%%%%%%%%%%

%%%%%%%%%%%%%%%%%%%%%%%%%%%%%%%%%%%%%%%%%%%%%%%%%%%%%%%%%%%%%%%%%%%%%%%%%%%%%%%%%%%%%%%%%%%%%%%%%%%
%
%%%%%%%%%%%%%%%%%%%%%%%%%%%%%%%%%%%%%%%%%%%%%%%%%%%%%%%%%%%%%%%%%%%%%%%%%%%%%%%%%%%%%%%%%%%%%%%%%%
%% The Appendices part is started with the command \appendix;
%% appendix sections are then done as normal sections
%% \appendix

%% \section{}
%% \label{}

%% References
%%
%% Following citation commands can be used in the body text:
%% Usage of \cite is as follows:
%%   \cite{key}          ==>>  [#]
%%   \cite[chap. 2]{key} ==>>  [#, chap. 2]
%%   \citet{key}         ==>>  Author [#]

%% References with bibTeX database:
%\bibliographystyle{plain}
%%\bibliographystyle{alpha}
%\bibliography{\myref/method/DG-paper,\myref/method/FE-book,\myref/method/FE-paper,\myref/method/DG-book}

%% Authors are advised to submit their bibtex database files. They are
%% requested to list a bibtex style file in the manuscript if they do
%% not want to use model1a-num-names.bst.

%% References without bibTeX database:

% \begin{thebibliography}{00}

%% \bibitem must have the following form:
%%   \bibitem{key}...
%%

% \bibitem{}

%%
%% End of file `elsarticle-template-1a-num.tex'.

% \end{thebibliography}
\end{document}